\documentclass[final,1p,times,sort&compress]{elsarticle}%
\usepackage{amsmath}
\usepackage{amssymb}
\usepackage{amscd}
\usepackage{amsfonts}
\usepackage{enumerate}
\usepackage{mathrsfs}
\usepackage{xy}
\usepackage{stmaryrd}
\usepackage{graphicx}
\usepackage{fancybox}
\usepackage{portland}
\usepackage{color}
\usepackage{multirow}
\usepackage{exscale}
\usepackage{enumitem,array}%
\usepackage{subfigure}
\usepackage{extarrows}
\usepackage{amsfonts}
\usepackage{slashbox}

\newtheorem{mytheo}{Theorem}[section]

\newtheorem{algo}[mytheo]{Algorithm}

\newtheorem{rem}[mytheo]{Remark}

\newcommand{\abs}[1]{\left\vert#1\right\vert}

\newcommand{\eps}{\varepsilon}

\vfuzz2pt \hfuzz2pt
\newcounter{remark}

\newcounter{problem}

\makeatletter
\def\@upcite#1#2{\textsuperscript{[{#1\if@tempswa , #2\fi}]}}
\makeatother
\newenvironment{proof}{\vspace{1ex}
	{\it Proof. }\hspace{0.3em}}{\vspace{1ex}} \journal{ }
\oddsidemargin=.34cm \evensidemargin=.34cm \textwidth=5.8in
\topmargin=-.25in \textheight=9in \headheight=0.15in \headsep=0.25in

\begin{document}
	\begin{frontmatter}
		\title{A novel class of explicit energy-preserving splitting methods for charged-particle dynamics}

		\author[1]{Xicui Li}
		\author[1]{Bin Wang\corref{cor1}}

		\address[1]{ School of Mathematics and Statistics, Xi'an Jiaotong University, 710049 Xi'an, China}

		\ead{lixicui@stu.xjtu.edu.cn,wangbinmaths@xjtu.edu.cn}\cortext[cor1]{Corresponding author.}

		
		\begin{abstract}
			In this letter, {based on the exponential scalar auxiliary variable technology}, we propose and study a new class of explicit energy-preserving splitting methods for solving the charged-particle dynamics. The energy-preserving property of these methods {is rigorously} analysed. {We also provide the error estimates for the new methods. Numerical computations are presented, which confirm the effectiveness  and superiority of these novel methods  in comparison with the standard scalar auxiliary variable approach.}
		\end{abstract}
		
		\begin{keyword}
			Exponential scalar auxiliary variable \sep splitting scheme \sep energy-preserving property \sep charged particle dynamics  \sep  error estimate
		\end{keyword}
	\end{frontmatter}
	
	\noindent Mathematics Subject Classification (2010):
	{65L05,  78A35, 78M25}
	\section{Introduction}
	
	In this letter, we focus on the charged-particle dynamics (CPD) {\cite{add1,2020Long}}
	\begin{equation}\label{CPD equ}
		\begin{split}
			&\ddot{x}(t)=\dot{x}(t)\times B(x(t))+E(x(t)),\ \ x(0)=x^0, \ \dot{x}(0)=\dot{x}^0,\ t\in [0,T],
		\end{split}
	\end{equation}
	with the position  $x(t)\in\mathbb{R}^3$ and the velocity $v(t):=\dot{x}(t)\in\mathbb{R}^3$ {of a particle}. The particle, whose initial values are $x^0$ and $v^0:=\dot{x}^0\in \mathbb{R}^3$, is moving in an non-uniform magnetic field $B(x(t))=(b_1(x),b_2(x),b_3(x))^\intercal$ and an electric field $E(x)=-\nabla_x U(x)$ with some scalar potential $U(x)$. The total energy of the CPD is conserved along the solution of \eqref{CPD equ} and it is {in the form of}
	\begin{equation}\label{H(x,v)}
		H\left(x(t),v(t)\right)=\frac{1}{2}\abs{v(t)}^2+U(x(t)),\quad t\geq0,
	\end{equation}
	with the Euclidean norm $\abs{\cdot}$.
	
	In recent years, the energy-preserving (EP) property of numerical methods for solving CPD has gained {considerable} attention, and numerous EP methods (\cite{L. Brugnano2020,L. Brugnano2019,Li-AML,Chacon,2020Error,22Li,Wang2021}) have been constructed and analysed to solve this system. However, all these methods are implicit {and a nonlinear iteration is needed in practical computations. Thus  it is time-consuming to adopt them to calculate the CPD  in comparison with explicit methods.} In order to improve the computational efficiency of EP methods, the {sacalar auxiliary variable (SAV)} \cite{2018Shen,2019Shen,19Li} approach has been considered to formulate a class of linearly implicit splitting EP schemes {(see e.g. in \cite{23Li})} which are shown to be  more efficient. By introducing an auxiliary scalar, the SAV approach is proposed for constructing energy stable schemes for a broad class of gradient flows \cite{79Allen,58Cahn} and has been effectively used in a number of conservative systems, such as Hamiltonian systems \cite{19Cai,20Cai}. The standard SAV approach for the system \eqref{CPD equ} is formulated under the condition that the scalar potential $U(x)$ is bounded from below, i.e., there exists a positive constant $c_0$ such that $U(x)\ge -c_0$. {Then} we can introduce a scalar $s(t)=\sqrt{U(x)+C_0}$ ($C_0>c_0$) {and {apply} splitting {technology} to obtain {numerical schemes (see \cite{23Li}). For example, { using SAV approach}, the first-order scheme (denoted by S1-SAV) has been derived in \cite{23Li}, which reads}
		\begin{equation}\label{S1SAV}
			\begin{aligned}	&x^{n+1}=x^n+he^{h\tilde{B}(x^n)}v^n+\frac{h^2}{2}\frac{E(\hat{x}^{n+\frac{1}{2}})}{\sqrt{U(\hat{x}^{n+\frac{1}{2}})+C_0}}s^{n+\frac{1}{2}},\ \ v^{n+1}=e^{h\tilde{B}(x^n)}v^n+h\frac{E(\hat{x}^{n+\frac{1}{2}})}{\sqrt{U(\hat{x}^{n+\frac{1}{2}})+C_0}}s^{n+\frac{1}{2}},\\
				&s^{n+1}
				=s^n-\frac{(x^{n+1}-x^n)^\intercal E(\hat{x}^{n+\frac{1}{2}})}{2\sqrt{U(\hat{x}^{n+\frac{1}{2}})+C_0}},
			\end{aligned}
		\end{equation}
		with the approximate term $\hat{x}^{n+\frac{1}{2}}=x^n+\frac{h}{2}e^{h\tilde{B}(x^n)}v^n$ and the notation $s^{n+\frac{1}{2}}=\frac{s^{n+1}+s^n}{2}$, {where $h$ is the time stepsize}. {According to the analysis in  \cite{23Li}, we know }that the S1-SAV {exactly }preserves the modified energy $\tilde{H}(v,s)=\frac{1}{2}\abs{v}^2+s^2-C_0$ at the discrete level.  { In addition, for the schemes presented in \cite{23Li},} there are three aspects that can be improved.
		\begin{itemize}
			\item The lower bound condition {is required for the  scalar potential  $U(x)$, which
				is not always satisfied, such as the case that}  $U(x)=x_1^3-x_2^3+x_1^4/5+x_2^4+x_3^4$.
			\item In the implementation of {the methods of \cite{23Li}, since the scheme is linearly implicit,} the calculation of solution variables and the
			auxiliary variable can not be decoupled. Thus we have to determine the inner product $\left(E(\hat{x}^{n+\frac{1}{2}})\right) ^\intercal x^{n+1}$ before computing $x^{n+1}$, which would become more {complicated} for high-order SAV schemes.
			\item {It is obvious that the scalar $s(t)>0$. However it is  difficult} to guarantee that {its numerical solution presented in \eqref{S1SAV} also satisfies} $s^n>0$.
		\end{itemize}
		
		Motivated by these points,
		we propose and study a new kind of explicit energy-preserving splitting methods for charged-particle dynamics. The new methods
		do not need the lower bound condition of  $U(x)$ and are {completely explicit} which {makes} the methods can be implemented more efficiently. Moreover, the new methods can share the property $s(t)>0$ of the scalar $s(t)$.
		The proposed methods are formulated based on   the exponential scalar auxiliary variable (E-SAV) technology which was firstly presented in \cite{20Liu} and has been  popular in the formulation of effective methods for various  phase field models such as Hamiltonian PDEs \cite{22Bo} and Allen-Cahn type equations \cite{22Ju}. For the system of CPD, using its specific structure and the E-SAV technology, we can} get rid of the assumption of the nonlinear potential scalar in the SAV approach, and this yields {totally} explicit energy stable schemes. As a result, we obtain a decoupled scheme, and the time consumption of E-SAV is more efficient than SAV.

	The rest of this letter is organized as follows. By introducing an exponential scalar auxiliary, Section \ref{method} presents two explicit splitting methods and analyzes their energy-preserving properties and global error bounds. In Section \ref{ne}, a numerical experiment is given to demonstrate the energy, cputime and accuracy behaviour of the obtained methods in comparison with the method S1-SAV. {Section \ref{conc} includes the conclusion of} this letter.
	
	\section{Numerical methods and their properties}\label{method}
	In this section, we {first consider an exponential scalar auxiliary variable: $r(t)=\exp(U(x))$. Then the equation \eqref{CPD equ} can be transformed into}
	{$$\frac{d}{dt}\begin{pmatrix}
			x\\v\\r
		\end{pmatrix}=\begin{pmatrix}
			v\\\tilde{B}(x)v+\frac{E(x)}{\exp(U(x))}r\\-\exp(U(x))\dot{x}^\intercal E(x)
		\end{pmatrix}  \ \ \textmd{with}\ \ \tilde{B}(x)=\begin{pmatrix}
			0&b_3(x)&-b_2(x)\\
			-b_3(x)&0&b_1(x)\\
			b_2(x)&-b_1(x)&0
		\end{pmatrix}.$$} Now we reformulate  the above equation as
	\begin{equation}\label{new CPD}
		\frac{d}{dt}\begin{pmatrix}
			x\\v\\\ln(r)
		\end{pmatrix}=\begin{pmatrix}
			v\\\tilde{B}(x)v+\frac{E(x)}{\exp(U(x))}r\\-\frac{\dot{x}^\intercal E(x)}{\exp(U(x))}r
		\end{pmatrix},\quad \begin{pmatrix}
			x(0)\\v(0)\\\ln(r(0))
		\end{pmatrix}={\begin{pmatrix}
				x^0\\v^0\\\ln(r^0)
		\end{pmatrix}},
	\end{equation}
	where $v^0:=\dot{x}^0$ and $r^0:=\exp(U(x^0))$. In order to {obtain} the numerical solution of the system \eqref{new CPD}, we split it into two subflows:
	\begin{equation}\label{subflows}
		\frac{d}{dt}\begin{pmatrix}
			x\\v\\\ln(r)
		\end{pmatrix}=\begin{pmatrix}
			0\\\tilde{B}(x)v\\0
		\end{pmatrix},\quad \frac{d}{dt}\begin{pmatrix}
			x\\v\\\ln(r)
		\end{pmatrix}=\begin{pmatrix}
			v\\\frac{E(x)}{\exp(U(x))}r\\-\frac{\dot{x}^\intercal E(x)}{\exp(U(x))}r
		\end{pmatrix}.
	\end{equation}
	For the first subflow, which is linear, it is easy to get its exact solution $\Phi_t^L$: $\begin{pmatrix}
		x(t)\\v(t)\\\ln(r(t))
	\end{pmatrix}=\begin{pmatrix}
		x(0)\\e^{\tilde{B}(x(0))}v(0)\\\ln(r(0))
	\end{pmatrix}.$
	Subsequently,  {for  the second subflow of \eqref{subflows}, we {denote $F(x,r):=\frac{E(x)}{\exp(U(x))}r$ and }{consider the  following explicit numerical} propagator $\Phi_h^{NL}$:
		\begin{equation}\label{ESAV}
			x^{n+1}=x^n+hv^n+\frac{h^2}{2}{F(\tilde{x}^{n+\frac{1}{2}},\tilde{r}^{n+\frac{1}{2}})},\ \ v^{n+1}=v^n+h {F(\tilde{x}^{n+\frac{1}{2}},\tilde{r}^{n+\frac{1}{2}})},\ \ln (r^{n+1})=\ln (r^n)- (x^{n+1}-x^n)^\intercal {F(\tilde{x}^{n+\frac{1}{2}},\tilde{r}^{n+\frac{1}{2}})},
		\end{equation}
		where $h$ is the time stepsize,
		$\tilde{x}^{n+\frac{1}{2}}=x^n+\frac{h}{2}v^n$ and $\tilde{r}^{n+\frac{1}{2}}=r^n-\frac{h}{2}\exp(U(\tilde{x}^{n+\frac{1}{2}}))(v^n)^\intercal E(\tilde{x}^{n+\frac{1}{2}})$ are {respectively numerical   approximations for $x(t_{n+\frac{1}{2}})$ and $r(t_{n+\frac{1}{2}})$ with the accuracy $\mathcal{O}(h^2)$ and $t_{n+\frac{1}{2}}=\big(n +\frac{1}{2}\big)h$}. Finally, we can obtain $r^{n+1}=\exp\left[ \ln (r^n)- (x^{n+1}-x^n)^\intercal {F(\tilde{x}^{n+\frac{1}{2}},\tilde{r}^{n+\frac{1}{2}})}\right]. $
		
		{With the above preparations, we are now in the position to present the scheme of the explicit energy-preserving splitting methods.}
		\begin{algo}\label{algo}{(Explicit Energy-Preserving Splitting Methods)}
			For the sake of brevity, we denote the numerical solution as $x^n\approx x(t_n)$, $v^n\approx v(t_n)$. On the basis of the composition of $\Phi_h^L$ and $\Phi_h^{NL}$, we derive the following explicit schemes, such as the first order splitting scheme $\Phi_h^1=\Phi_h^{NL}\circ\Phi_h^L$ (S1-ESAV):
			\begin{equation}\label{S1-ESAV}
				\begin{aligned}	&x^{n+1}=x^n+he^{h\tilde{B}(x^n)}v^n+\frac{h^2}{2}
					{F(\hat{x}^{n+\frac{1}{2}},\hat{r}^{n+\frac{1}{2}})},\ \ v^{n+1}=e^{h\tilde{B}(x^n)}v^n+h{F(\hat{x}^{n+\frac{1}{2}},\hat{r}^{n+\frac{1}{2}})},\\
					&\ln (r^{n+1})=\ln (r^n)- (x^{n+1}-x^n)^\intercal  {F(\hat{x}^{n+\frac{1}{2}},\hat{r}^{n+\frac{1}{2}})},
				\end{aligned}
			\end{equation}
			with the approximate terms $\hat{x}^{n+\frac{1}{2}}=x^n+\frac{h}{2}e^{h\tilde{B}(x^n)}v^n$, $\hat{r}^{n+\frac{1}{2}}=r^n-\frac{h}{2}\exp(U(\hat{x}^{n+\frac{1}{2}}))\left( e^{h\tilde{B}(x^n)}v^n\right) ^\intercal E(\hat{x}^{n+\frac{1}{2}})$ and the second order Strang splitting scheme $\Phi_h^2=\Phi_{h/2}^L\circ\Phi_h^{NL}\circ\Phi_{h/2}^L$ (S2-ESAV):
			\begin{equation}\label{S2-ESAV}
				\begin{aligned} &x^{n+1}=x^n+he^{\frac{h}{2}\tilde{B}(x^n)}v^n+\frac{h^2}{2}{F(\widehat{x}^{n+\frac{1}{2}},\widehat{r}^{n+\frac{1}{2}})},\ \ v^{n+1}=e^{\frac{h}{2}\tilde{B}(x^{n+1})}\left[ e^{\frac{h}{2}\tilde{B}(x^n)}v^n+h{F(\widehat{x}^{n+\frac{1}{2}},\widehat{r}^{n+\frac{1}{2}})}\right] ,\\
					&\ln (r^{n+1})=\ln (r^n)- (x^{n+1}-x^n)^\intercal {F(\widehat{x}^{n+\frac{1}{2}},\widehat{r}^{n+\frac{1}{2}})},
				\end{aligned}
			\end{equation}
			with the approximate terms $\widehat{x}^{n+\frac{1}{2}}=x^n+\frac{h}{2}e^{\frac{h}{2}\tilde{B}(x^n)}v^n$ and  $\widehat{r}^{n+\frac{1}{2}}=r^n-\frac{h}{2}\exp(U(\widehat{x}^{n+\frac{1}{2}}))\left( e^{\frac{h}{2}\tilde{B}(x^n)}v^n\right) ^\intercal E(\widehat{x}^{n+\frac{1}{2}})$.
			{These two methods are denoted by SESAVs.}
		\end{algo}
		
		{It is noted that higher-order schemes can be produced by  applying the Triple Jump splitting to S2-ESAV, but we skip this in the letter for brevity. In what follows, we study the energy-preserving property} of these two splitting schemes.
		
		\begin{mytheo}(Energy-Preserving property)\label{epp}
			The two splitting schemes {formulated} in Algorithm \ref{algo} exactly preserve {the energy $H(x^0,v^0)$ of the CPD  \eqref{CPD equ}.}
		\end{mytheo}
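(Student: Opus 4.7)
The plan is to show that both subflows $\Phi_h^L$ and $\Phi_h^{NL}$ exactly preserve the modified energy
\[
\tilde{H}(v,r):=\tfrac{1}{2}\abs{v}^2+\ln(r),
\]
since then any composition of them (and in particular $\Phi_h^1$ and $\Phi_h^2$) will preserve $\tilde H$, and a quick check of the initial data will identify the conserved value with $H(x^0,v^0)$.

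First, I would handle the linear flow $\Phi_h^L$. Here $x$ and $\ln(r)$ are unchanged, while $v\mapsto e^{h\tilde B(x)}v$. Since $\tilde B(x)$ is skew-symmetric, $e^{h\tilde B(x)}$ is orthogonal, so $\abs{v}$ (and hence $\tfrac12\abs{v}^2$) is preserved. Combined with $\ln(r)$ being unchanged, $\tilde H$ is invariant under $\Phi_h^L$ exactly.

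The key step is the nonlinear propagator $\Phi_h^{NL}$ in \eqref{ESAV}. Writing $F:=F(\tilde x^{n+1/2},\tilde r^{n+1/2})$, the velocity update $v^{n+1}=v^n+hF$ combined with the position update $x^{n+1}=x^n+hv^n+\tfrac{h^2}{2}F$ gives the crucial algebraic identity
\[
x^{n+1}-x^n=\tfrac{h}{2}\bigl(v^n+v^{n+1}\bigr).
\]
Multiplying $v^{n+1}-v^n=hF$ by $(v^{n+1}+v^n)^\intercal$ and using this identity yields
\[
\tfrac{1}{2}\abs{v^{n+1}}^2-\tfrac{1}{2}\abs{v^n}^2=hF^\intercal\cdot\tfrac{v^n+v^{n+1}}{2}\cdot\tfrac{2}{h}\cdot\tfrac{h}{2}=F^\intercal(x^{n+1}-x^n).
\]
On the other hand, the scheme defines $\ln(r^{n+1})-\ln(r^n)=-(x^{n+1}-x^n)^\intercal F$. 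Adding the two relations gives $\tilde H(v^{n+1},r^{n+1})=\tilde H(v^n,r^n)$, which is the desired invariance of $\Phi_h^{NL}$. I expect this algebraic cancellation — which is precisely what motivated the E-SAV formulation of the third equation in \eqref{new CPD} — to be the only nontrivial piece of the argument; the same identity applies verbatim to the nonlinear propagator appearing inside both $\Phi_h^1$ and $\Phi_h^2$, because $F$ depends only on frozen midpoint values $(\hat x^{n+1/2},\hat r^{n+1/2})$ or $(\widehat x^{n+1/2},\widehat r^{n+1/2})$.

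Finally, since $\Phi_h^1=\Phi_h^{NL}\circ\Phi_h^L$ and $\Phi_h^2=\Phi_{h/2}^L\circ\Phi_h^{NL}\circ\Phi_{h/2}^L$ are compositions of maps that each preserve $\tilde H$, both schemes preserve $\tilde H$ at every time step. Thus for all $n\geq 0$,
\[
\tfrac{1}{2}\abs{v^n}^2+\ln(r^n)=\tfrac{1}{2}\abs{v^0}^2+\ln(r^0)=\tfrac{1}{2}\abs{v^0}^2+U(x^0)=H(x^0,v^0),
\]
where the second equality uses the initialization $r^0=\exp(U(x^0))$. This completes the proposed proof.
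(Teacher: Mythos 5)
Your proposal is correct and uses the same key cancellation as the paper: the identity $x^{n+1}-x^n=\frac{h}{2}(v^{n}+v^{n+1})$ built into the nonlinear propagator makes $\frac{1}{2}\abs{v^{n+1}}^2-\frac{1}{2}\abs{v^n}^2=(x^{n+1}-x^n)^\intercal F$ cancel exactly against the $\ln(r)$ update, and the conserved value is identified with $H(x^0,v^0)$ via $r^0=\exp(U(x^0))$. The only (harmless) difference is organizational: you prove invariance of $\tilde H$ for each discrete subflow separately and invoke closure under composition, which handles S1-ESAV and S2-ESAV uniformly, whereas the paper carries out the expansion directly on the composed scheme \eqref{S1-ESAV} and remarks that S2-ESAV follows in the same way.
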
\begin{proof}
				{To this end, we first prove that the second subflow  of \eqref{subflows} exactly conserves the modified energy $\hat{H}$ with  the scheme
					\begin{equation}\label{me}
						\hat{H}(v,r):=\frac{1}{2}\abs{v}^2+\ln(r)=H(x,v).
					\end{equation}
					{For the second subflow, taking the inner product with $v$ of the second equality and using the other two equalities},  it is obtained that
					\begin{align*} {\frac{d}{dt}\left(\frac{1}{2}\abs{v}^2+\ln(r) \right)= v^\intercal  \dot{v}+\frac{\dot{r}}{r}=v^\intercal \frac{E(x)}{\exp(U(x))}r-\frac{\dot{x}^\intercal E(x)}{\exp(U(x))}r=0},
					\end{align*}
					which shows that $\frac{d}{d t}\hat{H}(v,r)=0$ and further yields \eqref{me}.
					
					Then we prove that} the two schemes preserve the modified energy \eqref{me} at the discrete level, i.e.,
				\begin{equation}\label{medl}
					\hat{H}(v^{n+1},r^{n+1})=\hat{H}(v^n,r^n),\ \ \text{for} \ \ n=0,1,2,\dots,T/h-1.
				\end{equation}
				It can be deduced from $v^{n+1}$ and $\ln(r^{n+1})$ in \eqref{S1-ESAV} that
				\begin{align*} \frac{1}{2}\abs{v^{n+1}}^2&=\frac{1}{2}\left[e^{h\tilde{B}(x^n)}v^n+h{F(\hat{x}^{n+\frac{1}{2}},\hat{r}^{n+\frac{1}{2}})}\right] ^\intercal \left[e^{h\tilde{B}(x^n)}v^n+h{F(\hat{x}^{n+\frac{1}{2}},\hat{r}^{n+\frac{1}{2}})}\right] \\
					&=\frac{1}{2}\abs{v^n}^2+\left[ he^{h\tilde{B}(x^n)}v^n+\frac{h^2}{2}{F(\hat{x}^{n+\frac{1}{2}},\hat{r}^{n+\frac{1}{2}})}\right] ^\intercal {F(\hat{x}^{n+\frac{1}{2}},\hat{r}^{n+\frac{1}{2}})}\\
					&=\frac{1}{2}\abs{v^n}^2-\ln (r^{n+1})+\ln (r^n),
				\end{align*}
				which indicates that {\eqref{medl} holds for S1-ESAV. It is clear that the modified energy-preserving property \eqref{medl} of S2-ESAV  can be proved} in the same way.
				
				{Finally,} based on the above proof and noting $\hat{H}(v,r)=H(x,v)$, {it is immediately concluded that our two methods satisfy} \begin{equation}\label{thm res}\hat{H}(v^n,r^n)=\hat{H}(v^0,r^0)=H(x^0,v^0)\ \ \text{for} \  \ n=1,2,\dots,T/h,\end{equation}  which completes the proof.
				\hfill $\blacksquare$
				
			\end{proof}
			
			{It should be pointed out that  the numerical results $r^n$ and $x^n$ produced by   Algorithm \ref{algo}
				usually do  not satisfy  $r^n=\exp(U(x^n))$. Thus  the statement \eqref{me} does not hold anymore for Algorithm \ref{algo}, i.e.,
				$\hat{H}(v^n,r^n)\neq H(x^n,v^n)$.  That's the reason why we prove the  result \eqref{thm res} in Theorem \ref{epp} instead of
				$H(x^n,v^n)= H(x^0,v^0)$.
			}
			
			\begin{mytheo}{(Global errors)
					Supposing that}  the nonlinear function $E(x): \mathbb{R}^3\rightarrow \mathbb{R}^3$ is sufficiently smooth,  there exists a sufficient small $0 < \alpha  \le 1$, such that when $0<h\le \alpha$, we have {
					{\begin{align*}& \textmd{S1-ESAV:}\qquad x^{n+1}-x(t_{n+1})=\mathcal{O}(h),\ \ x^{n+1}-x(t_{n+1})=\mathcal{O}(h), \\ &\textmd{S2-ESAV:}\qquad x^{n+1}-x(t_{n+1})=\mathcal{O}(h^2),\ \ x^{n+1}-x(t_{n+1})=\mathcal{O}(h^2),\end{align*}}
					where $n=0,1,2,\dots,T/h-1$ and the constants symbolized by
					$\mathcal{O}$ can depend on $T$ but not on $n, h$.}
			\end{mytheo}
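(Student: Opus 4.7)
The plan is to combine a local truncation error analysis of the two constituent propagators $\Phi_h^L$ and $\Phi_h^{NL}$ with the classical Lady Windermere's fan / discrete Gr\"onwall argument, so as to lift one-step errors to global ones. A first preparatory step is to fix a compact neighbourhood $\mathcal{K}$ of the exact orbit $\{(x(t),v(t),r(t)):0\le t\le T\}$ in which $r$ stays bounded away from $0$, and to record, on $\mathcal{K}$, uniform Lipschitz/bound estimates for $\tilde B$, $E$, $\exp(U)$ and $F(x,r)=E(x)r/\exp(U(x))$ using the smoothness assumption on $E$ (and hence on $U$). Since $\Phi_h^L$ is the exact flow of a linear ODE in $v$ only, it is trivially Lipschitz with constant $1+\mathcal{O}(h)$ on $\mathcal{K}$.

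The second step is to Taylor expand the exact flow $\Phi_h^{NL,\mathrm{ex}}$ of the nonlinear subflow in \eqref{subflows} and to compare it termwise with the numerical propagator \eqref{ESAV}. Because $\tilde x^{n+\frac{1}{2}}$ and $\tilde r^{n+\frac{1}{2}}$ are $\mathcal{O}(h^2)$ approximations of $x(t_n+h/2)$ and $r(t_n+h/2)$, and $F$ together with its derivatives is bounded on $\mathcal{K}$, the $v$-update in \eqref{ESAV} coincides with the midpoint rule applied to $\dot v=F(x,r)$ up to an $\mathcal{O}(h^3)$ remainder; the $x$-update is matched against $x(t_n+h)=x^n+hv^n+\tfrac{h^2}{2}F(x^n,r^n)+\mathcal{O}(h^3)$; and the $\ln r$-update is handled analogously using $\dot{\ln r}=-\dot x^{\intercal}F$. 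The conclusion is that $\Phi_h^{NL}$ has local error $\mathcal{O}(h^3)$ in every component.

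The third step is the splitting analysis proper, i.e.\ bounding the discrepancy between the fully exact compositions $\Phi_h^{L,\mathrm{ex}}\circ\Phi_h^{NL,\mathrm{ex}}$ and $\Phi_{h/2}^{L,\mathrm{ex}}\circ\Phi_h^{NL,\mathrm{ex}}\circ\Phi_{h/2}^{L,\mathrm{ex}}$ and the exact flow of \eqref{new CPD}. These are standard Baker--Campbell--Hausdorff / Taylor estimates for Lie and Strang splittings of smooth vector fields and give local errors of order $\mathcal{O}(h^2)$ and $\mathcal{O}(h^3)$ respectively, the extra order for the Strang scheme coming from symmetry of the composition. Combining Steps 2 and 3 through the triangle inequality, the one-step errors of S1-ESAV and S2-ESAV are $\mathcal{O}(h^2)$ and $\mathcal{O}(h^3)$, and a Lady Windermere's fan summation with the Lipschitz constants of Step 1 then yields the claimed $\mathcal{O}(h)$ and $\mathcal{O}(h^2)$ global bounds for both $x^n$ and $v^n$ on $[0,T]$.

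The main obstacle I expect to grind against is keeping the numerical iterates inside $\mathcal{K}$---and in particular keeping $r^n$ bounded away from $0$---so that the Lipschitz constants obtained in Step 1 remain valid uniformly in $n$. This is closed by a standard bootstrap: under the induction hypothesis that the error at step $n$ is at most $Ch^p$ (with $p=1$ or $p=2$), the iterate $(x^n,v^n,r^n)$ lies in a slightly enlarged compact subset of $\mathcal{K}$, the local-to-global machinery from Step 3 reproduces the same bound at step $n+1$, and the induction closes provided $C\alpha^p$ does not exceed the margin of $\mathcal{K}$. This is exactly the role of the smallness condition $0<h\le\alpha$ in the statement.
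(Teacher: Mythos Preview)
Your proposal is correct and follows essentially the same approach as the paper: the paper's own proof consists of the single sentence ``The proof is based on Taylor expansion and the local error analysis of splitting, which is omitted here for brevity,'' and your outline (local truncation error via Taylor expansion for $\Phi_h^{NL}$, standard Lie/Strang splitting error estimates, and a Lady Windermere/Gr\"onwall argument to pass from local to global bounds) is exactly the canonical way to fill in that sentence. If anything, your treatment is more careful than the paper's, since you explicitly address the bootstrap needed to keep the iterates in a compact set where the Lipschitz bounds are uniform.
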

			\begin{proof}
				{The proof is based on Taylor expansion and the local error analysis of splitting, which is omitted here for brevity.}\hfill $\blacksquare$
			\end{proof}
			
			{Besides the above results, there are some points which need to be noted}.
			\begin{rem}
				{From the scheme of the two methods, it follows that the methods keep the same property $r^{n+1}>0$  as the exact solution $r(t)>0$.}
				{It is also remarked that 	the proposed methods are explicit, and therefore they are more efficient than linearly implicit methods} in practical computations.
			\end{rem}
			\begin{rem}
				It is worth mentioning that the exponential function is rapidly increasing {and thus there may exist} a rapidly increase of errors which may  lead Algorithm \ref{algo} to lose efficiency. In order to avoid this {point}, we add a suitable positive constant $C=\abs{H(x^0,v^0)}$ in the exponential scalar auxiliary variable: $r(t)=\exp\left( \frac{U(x)}{C}\right)$ {and define $G(x,r):=\frac{E(x)}{\exp(U(x)/C)}r$, then we can }{obtain the modified S1-ESAV {(S1-MESAV) }in the form:}
				\begin{equation}
					\begin{aligned} &x^{n+1}=x^n+he^{h\tilde{B}(x^n)}v^n+\frac{h^2}{2}{G(\hat{x}^{n+\frac{1}{2}},\hat{r}^{n+\frac{1}{2}})},\ \ v^{n+1}=e^{h\tilde{B}(x^n)}v^n+h{G(\hat{x}^{n+\frac{1}{2}},\hat{r}^{n+\frac{1}{2}})},\\
						&\ln (r^{n+1})=\ln (r^n)-\frac{(x^{n+1}-x^n)^\intercal }{C}{G(\hat{x}^{n+\frac{1}{2}},\hat{r}^{n+\frac{1}{2}})}.
					\end{aligned}
				\end{equation}
				{In a same way, we can get the expression of modified S2-ESAV {(S2-MESAV)} and  the above two modified methods  are referred as SMESAVs. With the same arguments as above, it can be shown that} these two modified methods preserve the modified energy $\widehat{H}(v,r)=\frac{1}{2}\abs{v}^2+C\ln(r)$.
			\end{rem}
			
			\section{Numerical experiment}\label{ne}
			In {Section \ref{method}}, a novel class of explicit energy-preserving splitting schemes SESAVs and {SMESAVs were proposed for CPD}. To demonstrate their {numerical behaviour}  in accuracy, {cputime}, and energy conservation, we present a numerical experiment in this section. The linearly implicit splitting method S1-SAV \eqref{S1SAV} is used to make a comparison with SESAVs. The ``ode45'' function of MATLAB is used to get the reference solution.
			
			
			We consider the lower bound case of $U$ where the scalar potential is chosen as $U(x)=\frac{1}{100\sqrt{x_1^2+x_2^2}}$ to compare with S1-SAV and the magnetic field is chosen as  ({\cite{2020Long}}): {$B(x)={\frac{1}{\eps}}(0,0,\sqrt{x_1^2+x_2^2})^\intercal.$} For initial values we take $x(0)=(0,1,0.1)^\intercal$ and $v(0)=(0.09,0.05,0.2)^\intercal$.  Figure \ref{energy} shows the relative energy {errors}: $e\tilde{H}:=\frac{\abs{\tilde{H}(x^n,r^n)-H(x^0,v^0)}}{\abs{H(x^0,v^0)}}$ with {the scalar $r=\sqrt{U(x)+1}$ and }the modified energy {$\tilde{H}(v,r)=\frac{1}{2}\abs{v}^2+r^2-1$} for S1-SAV, $e\hat{H}:=\frac{\abs{\hat{H}(x^n,r^n)-H(x^0,v^0)}}{\abs{H(x^0,v^0)}}$ with {the scalar $r=\exp(U(x))$ and }the modified energy $\hat{H}(v,r)=\frac{1}{2}\abs{v}^2+\ln(r)$ for SESAVs, {and  $e\widehat{H}:=\frac{\abs{\widehat{H}(x^n,r^n)-H(x^0,v^0)}}{\abs{H(x^0,v^0)}}$ with {the scalar $r=\exp(U(x)/C)$, the constant $C=\abs{H(x^0,v^0)}$ and} the modified energy $\widehat{H}(v,r)=\frac{1}{2}\abs{v}^2+C\ln(r)$  for SMESAVs.} The global errors $error:=\frac{\abs{x^n-x(t_n)}}{\abs{x(t_n)}}+\frac{\abs{v^n-v(t_n)}}{\abs{v(t_n)}}$ are displayed in Figure \ref{ge}. Furthermore, Figure \ref{cpu} presents the cputime\footnote{{This test is conducted in a sequential program in MATLAB on a desktop (CPU: Intel (R) Core (TM) i7-8700 CPU @ 3.20 GHz, Memory: 8 GB, Os: Microsoft Windows 11 with 64bit).}}
			of S1-SAV and S1-ESAV.
			
			The following observations can be drawn from {Figures \ref{energy}-\ref{cpu}.  a)} {SESAVs and SMESAVs are energy-preserving, and the later ones hold a better energy behavior than the former especially in the case $\eps=1$.} b) S1-ESAV {behaves a same precision as S1-SAV but has} a better energy-preserving property. c) Figure \ref{cpu} illustrates that S1-ESAV outperforms S1-SAV in terms of computing efficiency. {In this numerical example, the performances of accuracy and cputime of SESAVs and SMESAVs are similar, and so we omit them for brevity.}
			
			\begin{figure}[tpb]
				\centering
				\begin{tabular}[c]{ccc}%
					\subfigure{\includegraphics[width=4.3cm,height=4.0cm]{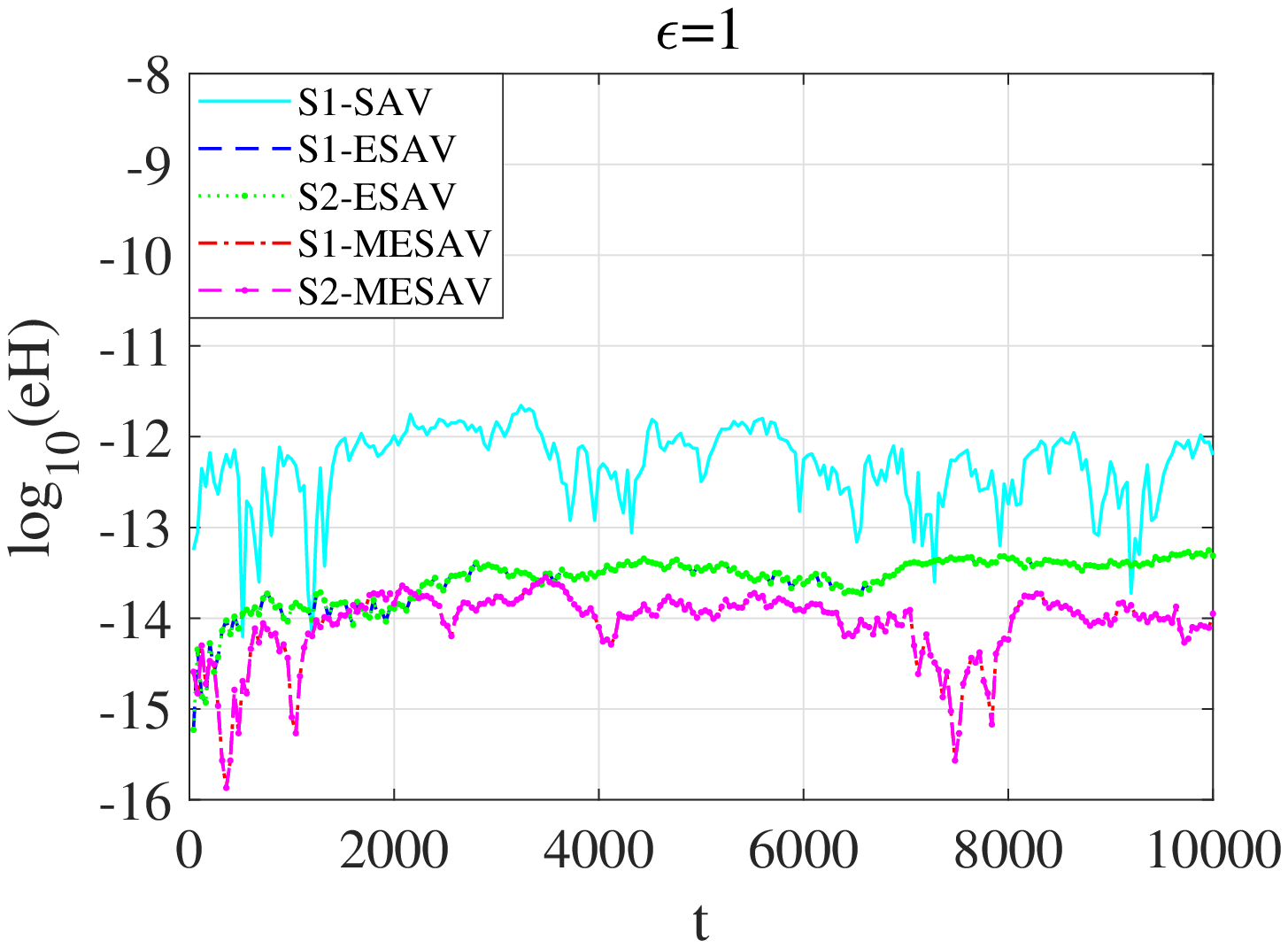}}			\subfigure{\includegraphics[width=4.3cm,height=4.0cm]{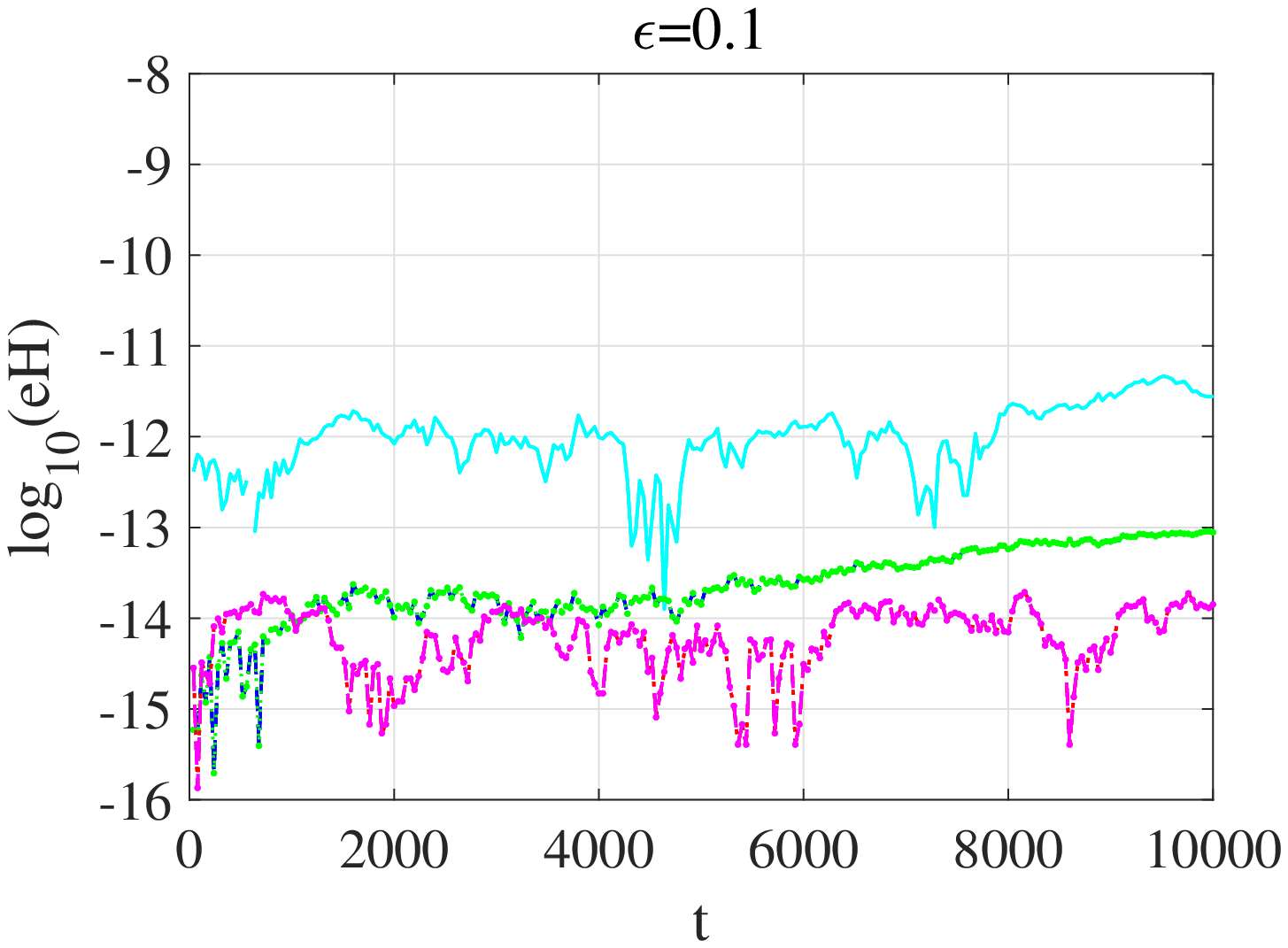}}
					\subfigure{\includegraphics[width=4.3cm,height=4.0cm]{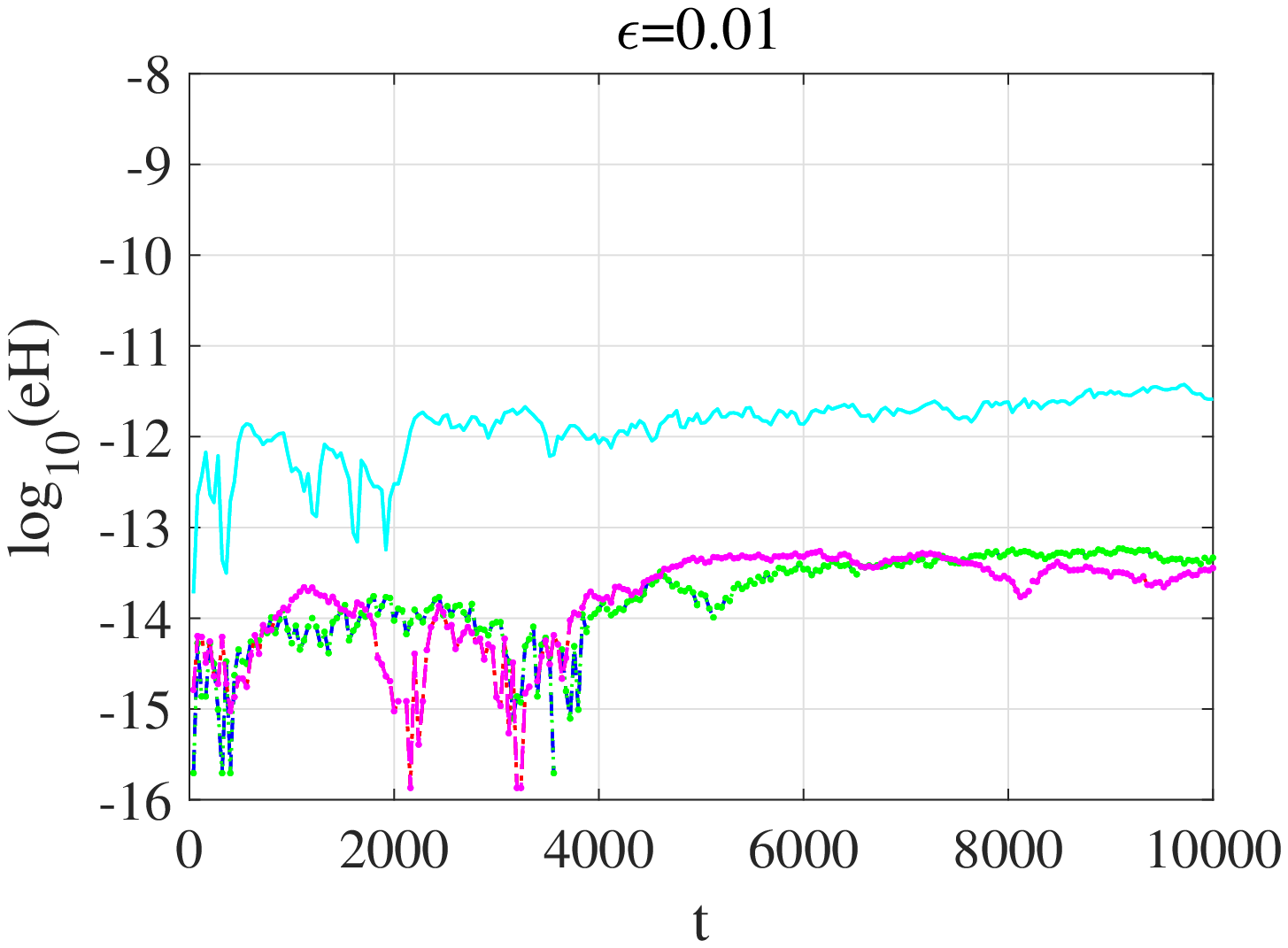}}		
				\end{tabular}
				\caption{Evolution of the energy error $e_H$ as function of time $t$. }\label{energy}
			\end{figure}
			
			\begin{figure}[tpb]
				\centering
				\begin{tabular}[c]{ccc}%
					\subfigure{\includegraphics[width=4.3cm,height=4.0cm]{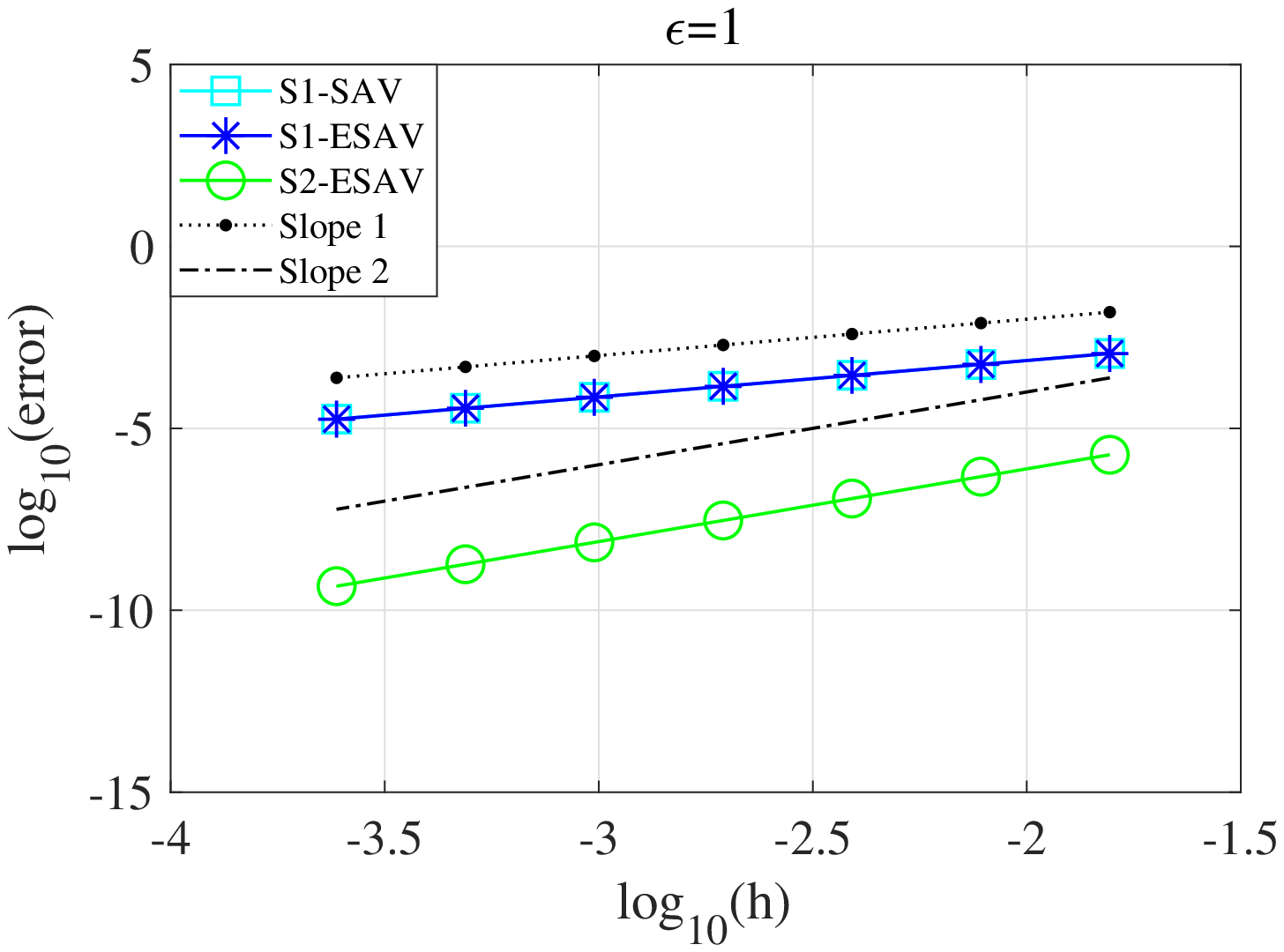}}		
					\subfigure{\includegraphics[width=4.3cm,height=4.0cm]{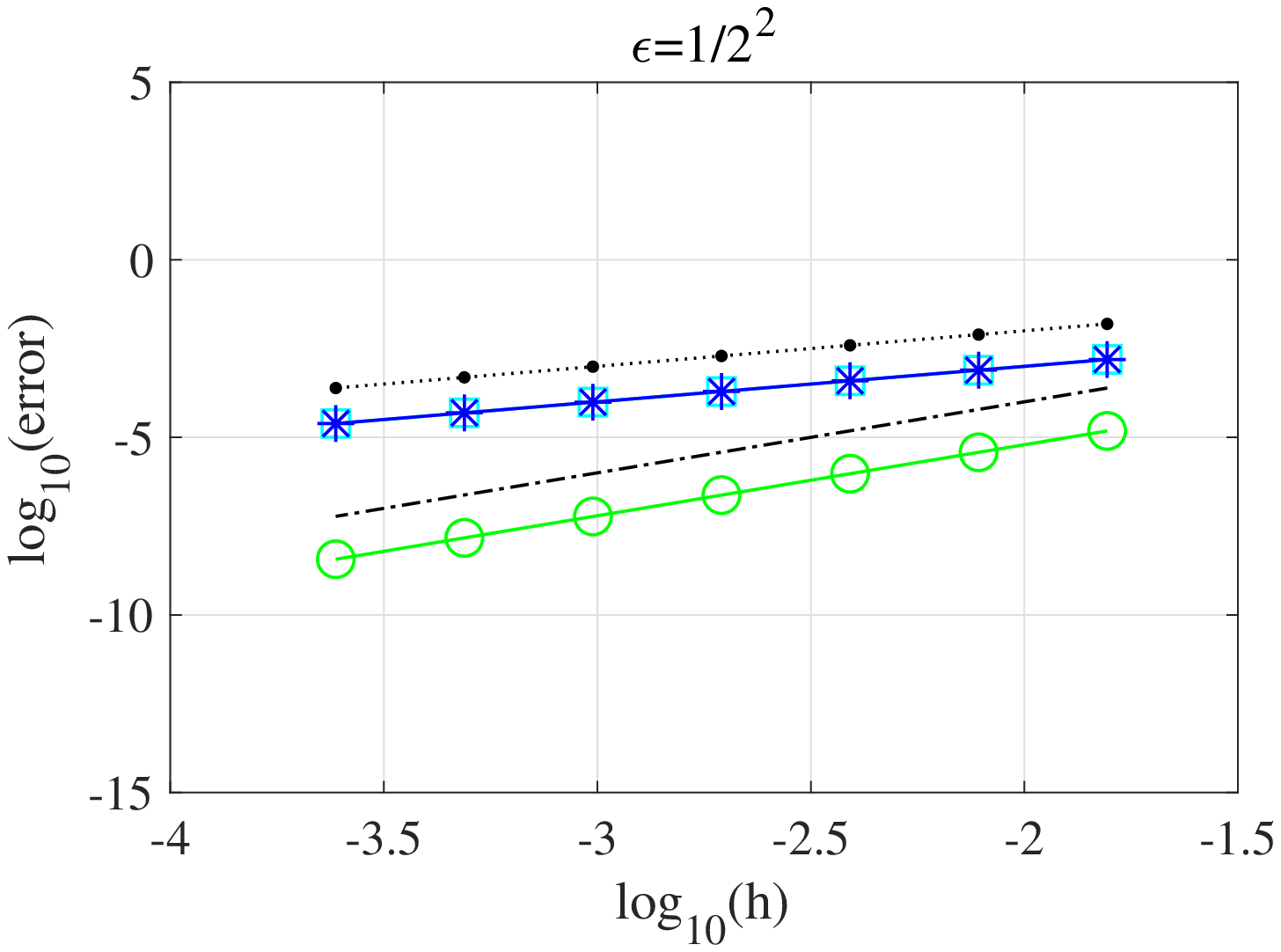}}
					\subfigure{\includegraphics[width=4.3cm,height=4.0cm]{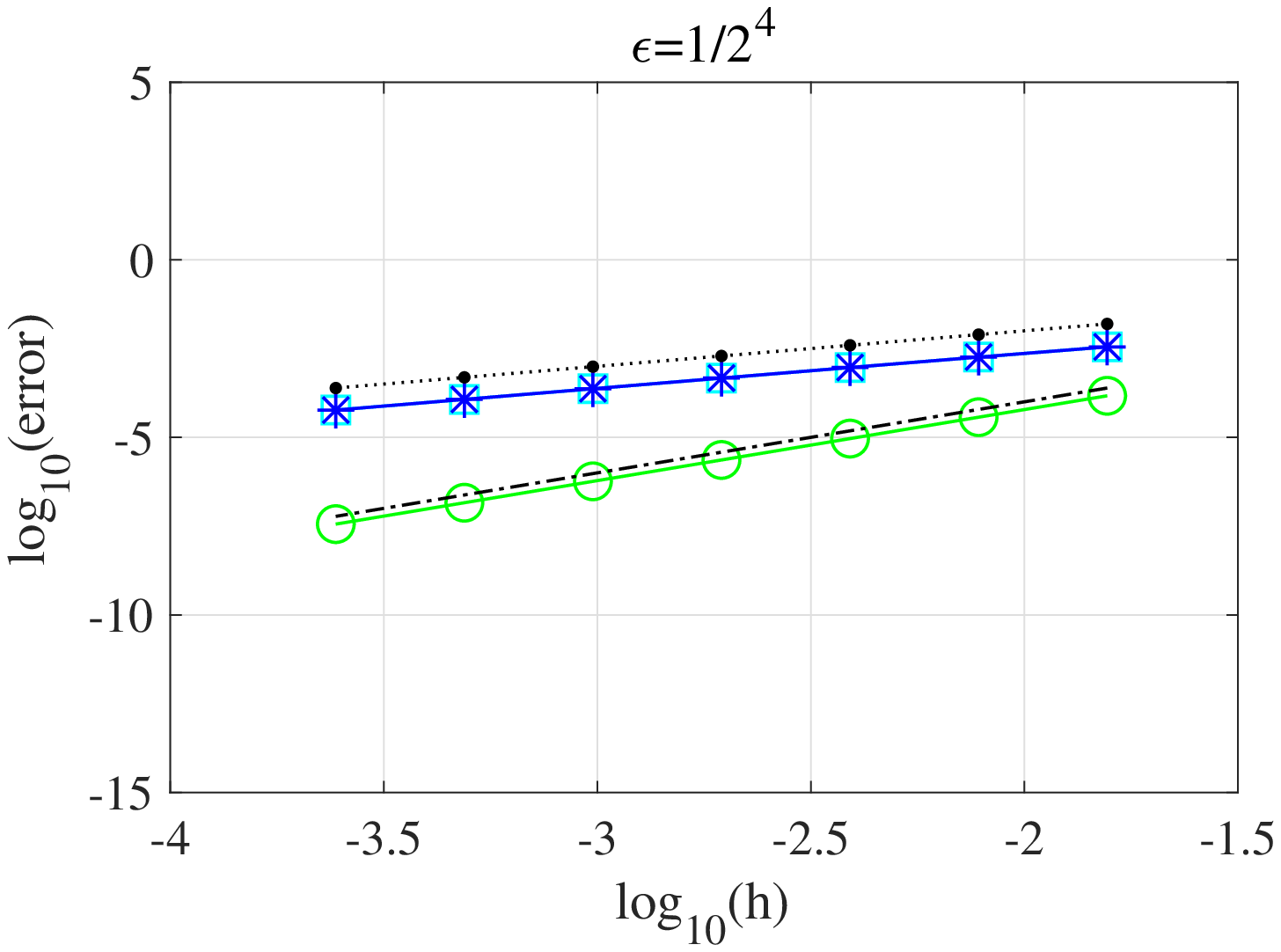}}			
				\end{tabular}
				\caption{The global errors  $error$ with {$t=1$} and $h=1/2^k$ for $k=6,\ldots,12$ under different $\eps$. }\label{ge}
			\end{figure}
			
			\begin{figure}[tpb]
				\centering
				\begin{tabular}[c]{ccc}%
					\subfigure{\includegraphics[width=4.3cm,height=4.0cm]{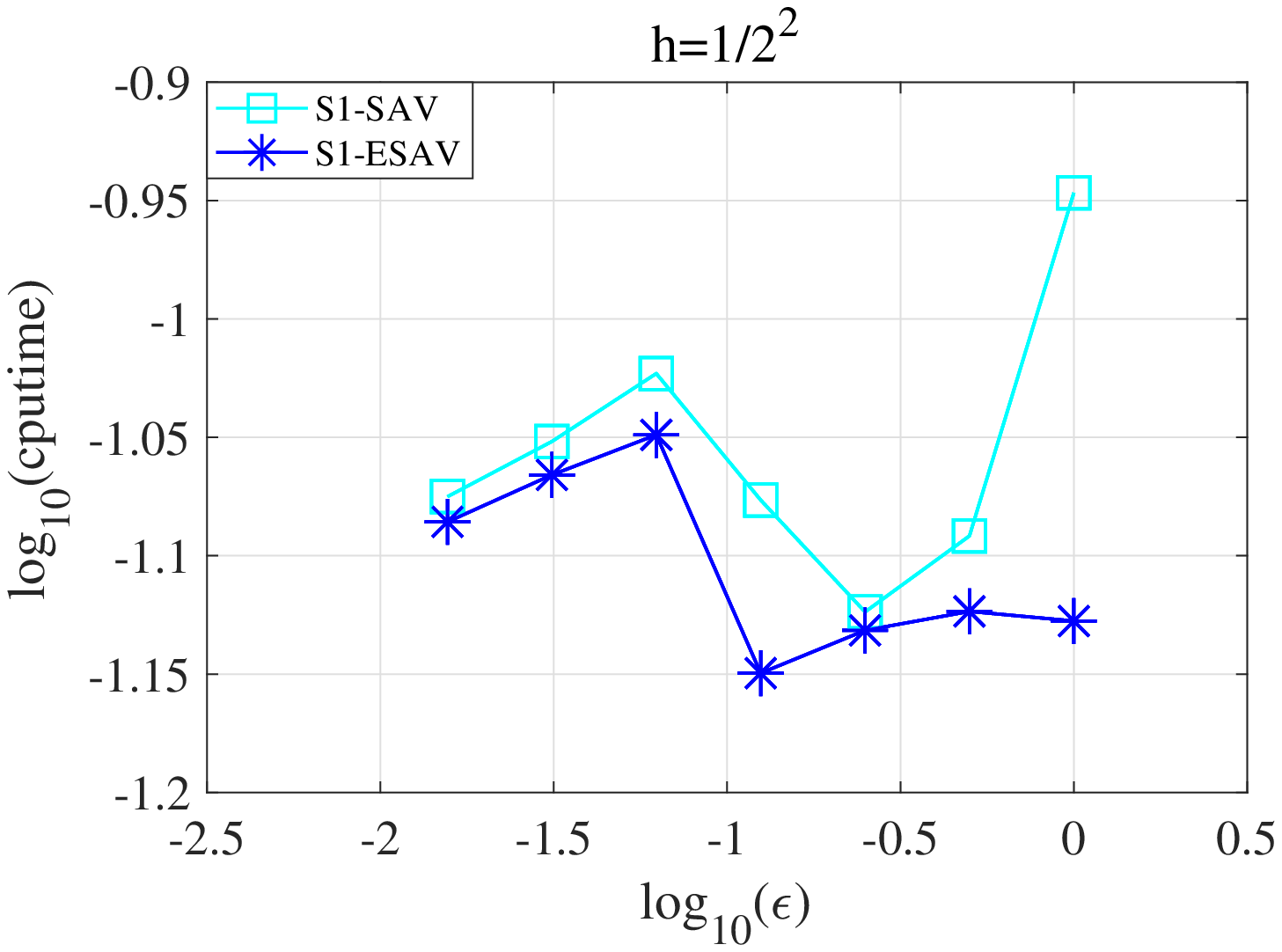}}		
					\subfigure{\includegraphics[width=4.3cm,height=4.0cm]{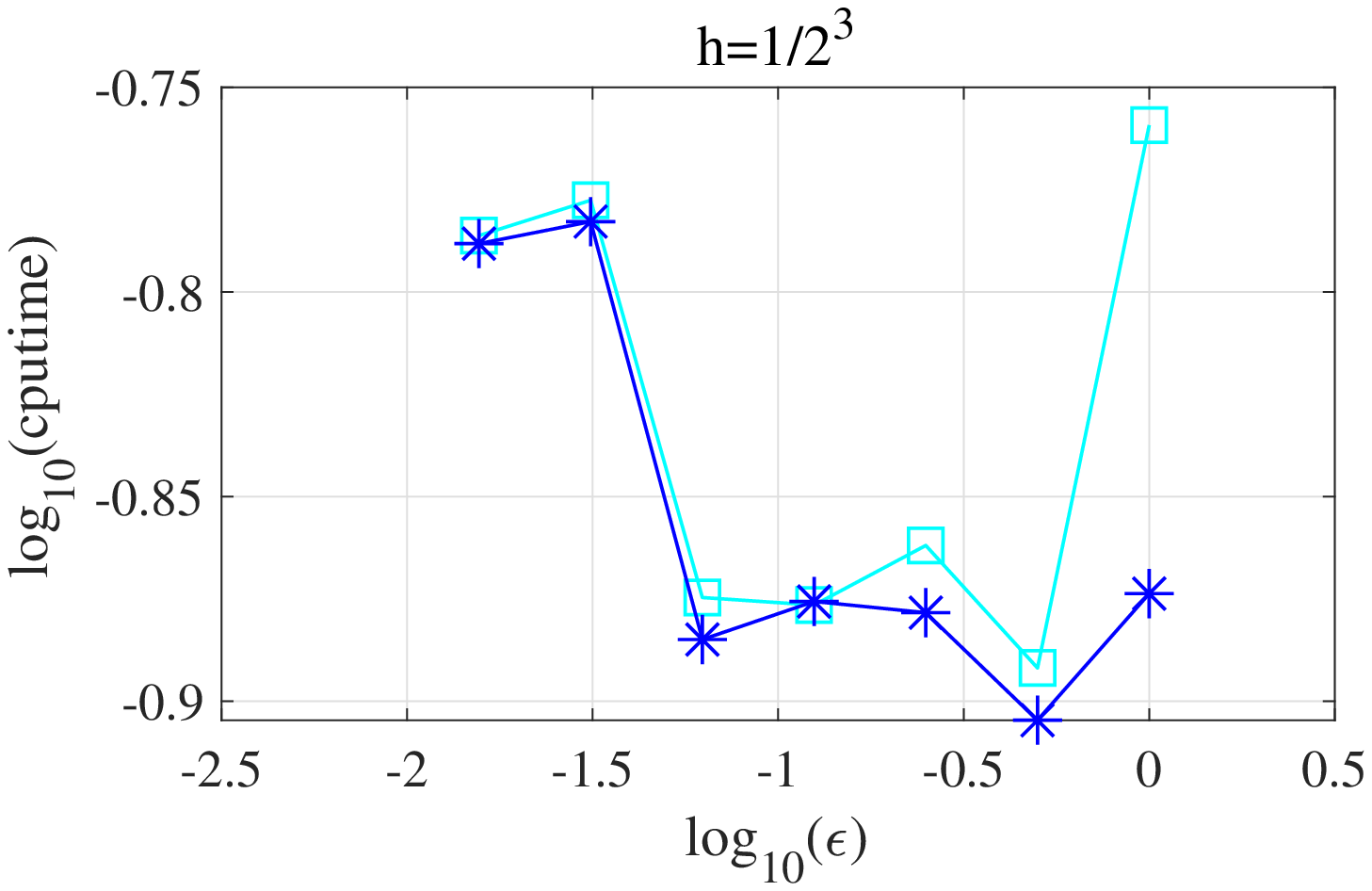}}
					\subfigure{\includegraphics[width=4.3cm,height=4.0cm]{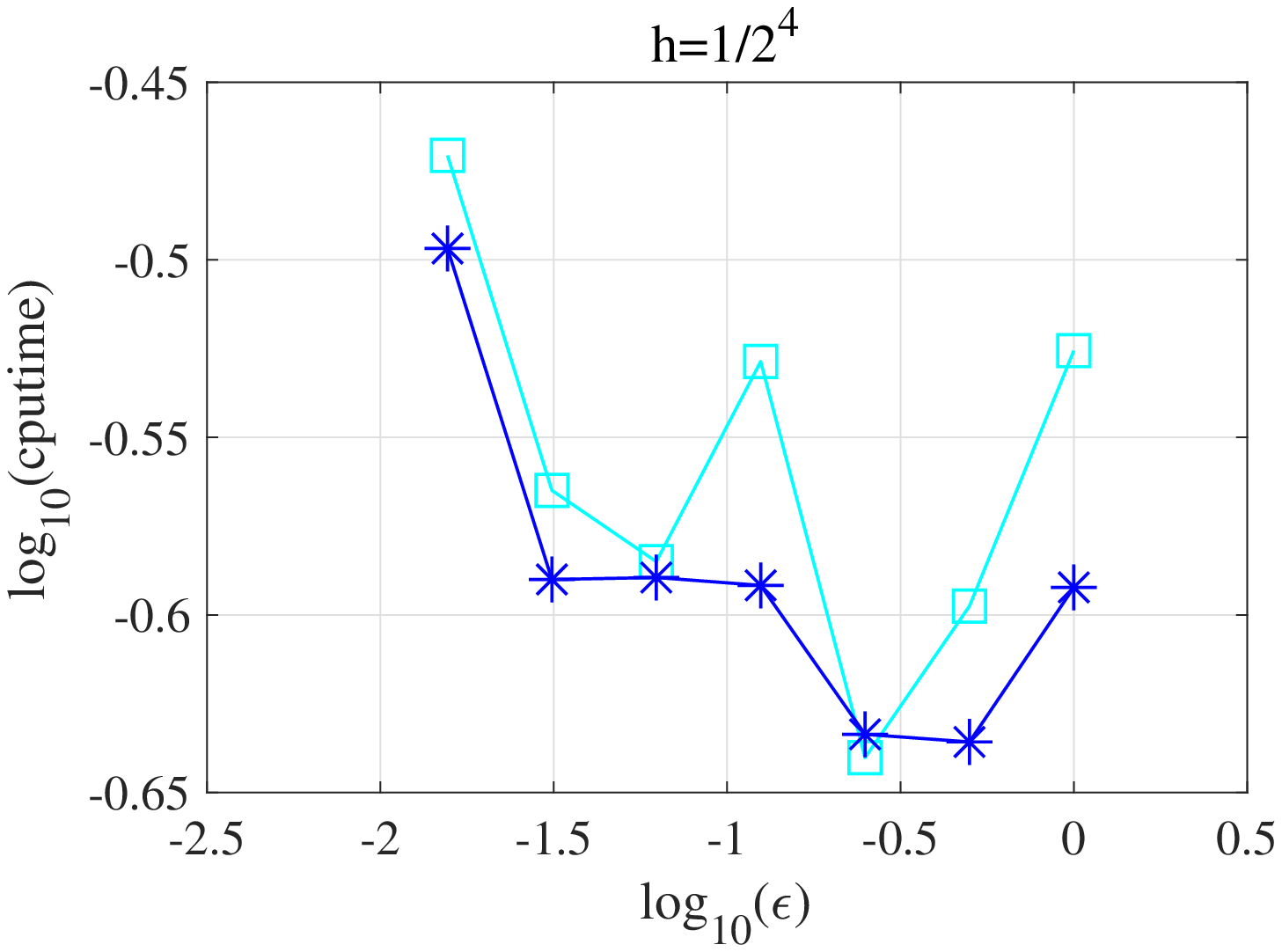}}			
				\end{tabular}
				\caption{The cputime with {$T=100$} and $\eps=1/2^k$ for $k=0,1,\ldots,6$ under different $h$. }\label{cpu}
			\end{figure}
			\section{Conclusion}\label{conc}
			In this letter, we have proposed two  energy-preserving splitting {methods} (SESAVs) for the charged-particle dynamics.
			{It was shown that these SESAVs are explicit and exactly preserve the energy of the charged-particle dynamics. The} accuracy of SESAVs was also presented. A numerical experiment was carried out  to illustrate the accuracy and energy conservation of these methods.

		\end{document}